\renewcommand{\footnotesize}{\scriptsize}
\renewcommand{\thefootnote}{\alph{footnote}}
\newcommand{\symbolfootnote}[1]{%
\let\oldthefootnote=\thefootnote%
\stepcounter{mpfootnote}%
\addtocounter{footnote}{-1}%
\renewcommand{\thefootnote}{\fnsymbol{mpfootnote}}%
\footnote{#1}%
\let\thefootnote=\oldthefootnote%
}
\definecolor{eggplant}{HTML}{800080}
\definecolor{mallard}{HTML}{008080}
\definecolor{dusky cerulean}{HTML}{004080} 
\definecolor{Chinese loquat}{HTML}{F7C015}
\definecolor{pumpkin}{HTML}{FF7518}
\definecolor{cyan-blue}{HTML}{18A2FF}
\definecolor{vernal green}{HTML}{03E364}
\definecolor{magenta}{HTML}{E30382}
\definecolor{cynara violet}{HTML}{8803E3}
\newlength\bshft
\def\pseudobold#1{\ThisStyle{\ooalign{$\SavedStyle#1$\cr%
  \kern-\bshft$\SavedStyle#1$\cr%
  \kern\bshft$\SavedStyle#1$}}}
\DeclareSymbolFontAlphabet{\mathbb}{AMSb}
\DeclareSymbolFontAlphabet{\mathbbl}{bbold}
\titleformat{\section}[block]
{\large
\bfseries}
{\thesection.}{0.5em}{}
\titleformat{\subsection}[block]
{\normalsize
\bfseries}
{\thesubsection.}{0.5em}{}
\titleformat{\subsubsection}[block]
{\normalsize
\bfseries}
{\thesubsubsection.}{0.5em}{}
\bfseries\thecontentslabel{. }}
\newtheoremstyle{personal1}
  {\topsep}   
  {\topsep}   
  {\upshape}  
  {0pt}       
  {\itshape}  
  {.}         
  {5pt plus 1pt minus 1pt} 
  {\bfseries\thmname{#1}\thmnumber{ #2}\normalfont\thmnote{ \itshape(#3)}}
\theoremstyle{personal1}
\newtheoremstyle{personal2}
  {\topsep}   
  {\topsep}   
  {\itshape} 
  {0pt}      
  {\itshape} 
  {.}       
  {5pt plus 1pt minus 1pt} 
  {\bfseries\thmname{#1}\thmnumber{ #2}\normalfont\thmnote{ \itshape(#3)}} 
\theoremstyle{personal2}
\newtheorem{theorema}{Theorem}[section]
\newtheoremstyle{personal3}
  {\topsep}   
  {\topsep}   
  {\upshape}  
  {0pt}       
  {\itshape} 
  {.}         
  {5pt plus 1pt minus 1pt} 
  {\thmname{#1}\thmnumber{ \itshape#2}\thmnote{ (#3)}} 
\theoremstyle{personal3}
\newcommand{\enumerationisinitium}{\begin{enumerate}[nolistsep, wide, label = \textnormal{($\mathnormal{\arabic{*}}$)}, ref = \textnormal{($\mathnormal{\arabic{*}}$)}]}
\newcommand{\enumerationisfinis}{\end{enumerate}}
\newcommand{\subenumerationisinitium}{\begin{enumerate}[nolistsep, wide, label = \textnormal{(\roman{*})}, ref = \textnormal{(\roman{*})}]}
\newcommand{\subenumerationisfinis}{\end{enumerate}}
\newenvironment{indent paragraph: 15pt}{%
  \par%
  \leftskip=15pt%
  \noindent\ignorespaces}{%
  \par}
\newenvironment{indent paragraph: 30pt}{%
  \par%
  \leftskip=30pt%
  \noindent\ignorespaces}{%
  \par}
\newcommand{\xMapsto}[2][]{\ext@arrow 0599{\Mapstofill@}{#1}{#2}}
\def\Mapstofill@{\arrowfill@{\Mapstochar\Relbar}\Relbar\Rightarrow}
\DeclareSymbolFont{tipa}{T3}{cmr}{m}{n}
\DeclareMathAccent{\invertedbreve}{\mathalpha}{tipa}{16}
\DeclareMathOperator{\Laplacian}{\bigtriangleup}
\DeclareRobustCommand{\reflectedepsilon}{\reflectbox{$\epsilon$}}
\newcommand{\Hardy}{H}
\newcommand{\Lebesgue}{L}
\newcommand{\viz}{\stackrel{\textnormal{\tiny{viz}}}{=}}
\newcommand*\Neginternal[3]{\mathpalette\Neg@{{#1}{#2}{#3}}}
\newcommand*\Neg@[2]{\Neg@@{#1}#2}
\newcommand*\Neg@@[4]{%
  \mathrel{\ooalign{%
    $\m@th#1#4$\cr
    \hidewidth$\m@th#3{#1}\mkern\muexpr#2*2$\hidewidth\cr
  }}%
}
\newcommand*\negslash[1]{\m@th#1\not\mathrel{\phantom{=}}}
\newcommand*\snegslash[1]{\rotatebox[origin=c]{60}{$\m@th#1-$}}
\newcommand*\ssnegslash[1]{\rotatebox[origin=c]{60}{$\m@th#1{\dabar@}\mkern-7mu{\dabar@}$}}
\newcommand*\sssnegslash[1]{\rotatebox[origin=c]{60}{$\m@th#1\dabar@$}}
\newcommand\footnoteref[1]{\protected@xdef\@thefnmark{\ref{#1}}\@footnotemark}
\let\OLDthebibliography\thebibliography
\renewcommand\thebibliography[1]
\title[]{Bounded Mean Oscillation: \\ an $\protect\pseudobold{\mathbb{R}}$-Function with Multi-$\protect\pseudobold{\mathbb{K}_6}$ Cubes. \\
Dual of the Hardy Space $\protect\pseudobold{\Hardy^1}$ and Banach Extent}
\author{\small\href{https://edoardoniccolai.com}{Edoardo Niccolai}
	}
\begin{document}

\begin{abstract}
This paper investigates the concept of harmonic functions of bounded mean oscillation, starting from John–Nirenberg's pioneering studies, under a renewed formalism, suitable for bringing out some fundamental properties inherent in it. In more detail: after a quick introduction, the second Section presents the main theorem, plus complete proof, relating to this function; in the third Section there is a suggestion on the exponential integrability (theorem and sketch of proof), while the fourth Section deals with the duality of Hardy space $\Hardy^1$ and bounded mean oscillation, with some ideas for a demonstration. The writing closes with a graphic appendix.

\vspace{2mm}

\noindent \textsc{Keywords}: $\mathbb{K}_6$ cubes, Banach space, bounded mean oscillation, duality of Hardy space $\Hardy^1$, exponential integrability, harmonic functions.  
\end{abstract}

\maketitle

\begingroup
\hypersetup{linktocpage}
\tableofcontents
\thispagestyle{empty}
\endgroup

\section{A Few Words of Introduction}

\emph{Bounded mean oscillation} indicates an $\mathbb{R}$-function  characterized by a finite mean oscillation. This can be expressed by a theorem, investigated by F. John and L. Nirenberg \cite{John and Nirenberg "On Functions of Bounded Mean Oscillation"} in 1961. Let us try to give a more modern and concise form to this theorem, so as to outline some of its essential peculiarities in a few quick strokes. We will also take a look at the integrability of this harmonic function, and especially at its function space.

\section{Bounded Mean Oscillation's Statement: How to do Harmonic Analysis with Cubes}

\begin{theorema}
\label{theorema "John–Nirenberg"}
Let $\varphi[\mathbb{R}]$ be a function of bounded mean oscillation, i.e. $\varphi \in \varphi_{\mathrm{b}\sim}^{[\mathbb{R}]}$, $\mathbb{R} \viz \mathbb{R}^n$, for any cube $\mathbb{K}_6$, and multi-index $\alpha = (\alpha_1, \mathellipsis, \alpha_n) \in \mathbb{Z}_* = \{0\} \cup \mathbb{Z}_+$ of order $|\alpha| \leqslant k \in \mathbb{Z}_*$, where $\alpha > 0$ and $|\alpha| = \alpha_1 + \cdots + \alpha_n$. One has the inequality
\begin{equation}
\label{equation "Theorematic inequality"}
	\bigg|\left(x \in \mathbb{K}_6 \colon \left|\varphi(x) - N_{(\mathrm{a})}^{\mathbb{K}_6}\varphi\right| > \alpha\right)\bigg| \leqslant e |\mathbb{K}_6|\exp{\left\{-\frac{(2^ne)^{-1}\alpha}{\|\varphi\|_{\mathrm{b}\sim}}\right\}},
\end{equation}
in which $N_{(\mathrm{a})}^{\mathbb{K}_6}$ connotes the average value of $\varphi$ over $\mathbb{K}_6$, and $\|\varphi\|_{\mathrm{b}\sim} \viz \|\varphi\|_\textsc{bmo}$ (\textsc{bmo} is, patently, for \emph{bounded mean oscillation}).
\end{theorema}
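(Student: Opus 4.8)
The plan is to derive \eqref{equation "Theorematic inequality"} by a Calderón–Zygmund stopping-time iteration, collapsing everything into a single self-improving recursion for the cube-uniform distribution of $\varphi$. First I would normalise, dividing through by $\|\varphi\|_{\mathrm{b}\sim}$ so as to assume $\|\varphi\|_{\mathrm{b}\sim} = 1$; the general statement then follows by rescaling the threshold $\alpha \mapsto \alpha/\|\varphi\|_{\mathrm{b}\sim}$. It is convenient to set
\[
	A(\alpha) \viz \sup_{\mathbb{K}_6} \frac{1}{|\mathbb{K}_6|}\Big|\big\{x \in \mathbb{K}_6 \colon |\varphi(x) - N_{(\mathrm{a})}^{\mathbb{K}_6}\varphi| > \alpha\big\}\Big|,
\]
the supremum running over all cubes, so that the target is precisely $A(\alpha) \leqslant e\,\exp(-2^{-n}e^{-1}\alpha)$.

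The core step is the decomposition. Fixing a cube $\mathbb{K}_6$ and a height $\lambda > 1$, I would run the dyadic stopping-time argument on $\varphi - N_{(\mathrm{a})}^{\mathbb{K}_6}\varphi$: bisect $\mathbb{K}_6$ repeatedly into $2^n$ congruent descendants and retain the maximal subcubes $Q_j$ on which the mean of $|\varphi - N_{(\mathrm{a})}^{\mathbb{K}_6}\varphi|$ first exceeds $\lambda$. Three facts then carry the whole argument: (i) $\sum_j |Q_j| \leqslant \lambda^{-1}\int_{\mathbb{K}_6}|\varphi - N_{(\mathrm{a})}^{\mathbb{K}_6}\varphi| \leqslant \lambda^{-1}|\mathbb{K}_6|$, since $\|\varphi\|_{\mathrm{b}\sim}=1$; (ii) $|\varphi - N_{(\mathrm{a})}^{\mathbb{K}_6}\varphi| \leqslant \lambda$ almost everywhere off $\bigcup_j Q_j$; and (iii) because each selected $Q_j$ has a \emph{non-selected} parent of $2^n$ times its volume, $|N_{(\mathrm{a})}^{Q_j}\varphi - N_{(\mathrm{a})}^{\mathbb{K}_6}\varphi| \leqslant 2^n\lambda$.

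From here the recursion assembles itself. If $|\varphi(x) - N_{(\mathrm{a})}^{\mathbb{K}_6}\varphi| > \alpha + 2^n\lambda$, then by (ii) the point $x$ must lie in some $Q_j$, and there (iii) forces $|\varphi(x) - N_{(\mathrm{a})}^{Q_j}\varphi| > \alpha$; summing over $j$ and invoking (i) yields
\[
	A(\alpha + 2^n\lambda) \leqslant \lambda^{-1} A(\alpha).
\]
Iterating from the trivial bound $A(0) \leqslant 1$ gives $A(k\,2^n\lambda) \leqslant \lambda^{-k}$, and interpolating in $\alpha$ produces $A(\alpha) \leqslant \lambda\,\exp(-\tfrac{\ln\lambda}{2^n\lambda}\,\alpha)$. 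The exponent is steepest where $\lambda^{-1}\ln\lambda$ is largest, namely at $\lambda = e$; this single choice simultaneously delivers the prefactor $e$ and the rate $(2^n e)^{-1}$, matching \eqref{equation "Theorematic inequality"} exactly once $\|\varphi\|_{\mathrm{b}\sim}$ is restored.

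I expect the genuine obstacle to be the decomposition itself — above all fact (iii) together with the licence to iterate. One must re-run the stopping time inside each $Q_j$ against its \emph{own} mean $N_{(\mathrm{a})}^{Q_j}\varphi$, whose oscillation is again governed by $\|\varphi\|_{\mathrm{b}\sim}=1$, so the self-similarity that closes the recursion has to be arranged with care; and it is precisely the passage from an arbitrary cube to its dyadic children that pushes the dimensional factor $2^n$ into the exponent. By contrast, the interpolation in $\alpha$ and the optimisation $\lambda = e$ are routine bookkeeping.
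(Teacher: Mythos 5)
Your proposal is correct and follows essentially the same route as the paper: the identical Calder\'on--Zygmund stopping-time decomposition, the same three key facts (the a.e.\ bound off the selected cubes, the $2^n\lambda$ control of the means obtained from the non-selected parent, and the measure decay $\sum_j|Q_j|\leqslant\lambda^{-1}|\mathbb{K}_6|$), and the same final optimisation $\lambda=e$ producing the prefactor $e$ and the rate $(2^ne)^{-1}$. The only difference is organisational: you close the iteration with the recursion $A(\alpha+2^n\lambda)\leqslant\lambda^{-1}A(\alpha)$ on the cube-uniform distribution function, whereas the paper tracks the successive generations of selected cubes explicitly and accumulates the pointwise bound $2^n\lambda\epsilon$ on the deviation off their union --- equivalent bookkeeping for the same argument.
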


\begin{proof}
We start from a cube $\invertedbreve{\mathbb{K}}_6$, and we choose this principle: 
\begin{equation}
\label{equation "principle for invertedbreveK6 1"}
	\frac{1}{|\invertedbreve{\mathbb{K}}_6|}\int_{\invertedbreve{\mathbb{K}}_6}\left|\varphi(x) - N_{(\mathrm{a})}^{\mathbb{K}_6}\varphi\right|dx > \reflectedepsilon,	
\end{equation}
with a constant $\reflectedepsilon > 1$. But immediately we realize that the principle \eqref{equation "principle for invertedbreveK6 1"} does not apply to $\mathbb{K}_6$, and in fact: $\frac{1}{|\mathbb{K}_6|}\int_{\mathbb{K}_6}\left|\varphi(x) - N_{(\mathrm{a})}^{\mathbb{K}_6}\varphi\right|dx \leqslant \|\varphi\|_{\mathrm{b}\sim} = 1 < \reflectedepsilon$. 

 We remember that $|\mathbb{K}_6|$ is the volume of $\mathbb{K}_6$, which translates into its Lebesgue measure, $|\mathbb{K}_6| \viz \bblambda(\mathbb{K}_6)$; idem for $|\invertedbreve{\mathbb{K}}_6| \viz \bblambda(\invertedbreve{\mathbb{K}}_6)$, or for any other cube $|\mathbb{K}_{(\cdot)}| \viz \bblambda(\mathbb{K}_{(\cdot)})$, plainly.

The next step is to take a cube $\mathbb{K}_6^{\textgreek{\textit{α}}} \viz \mathbb{K}_6$, and divide it into $2^n$ equal subcubes, so as to see if the resulting subcubes, which we can call $\invertedbreve{\mathbb{K}}_6^{\textgreek{\textit{{α}}}_n}$, satisfy the principle \eqref{equation "principle for invertedbreveK6 1"}. Such a process is repeated an infinite number of times, in order to gain a multiset of cubes $\left\{\mathbb{K}_{6_r}^{\textgreek{\textit{β}}}\right\}_r$ whose features are:
\begin{subequations}
\label{subequations "C6r features"}
\begin{align}
	& 
	\label{align "feature 1"}
	\left|\varphi - N_{(\mathrm{a})}^{\mathbb{K}_6^{\textgreek{\textit{α}}}}\varphi\right| \leqslant \reflectedepsilon \text{ almost everywhere on }\mathbb{K}_6^{\textgreek{\textit{α}}}\backslash\bigcup_r\mathbb{K}_{6_r}^{\textgreek{\textit{β}}}, \\
	& \left|N_{(\mathrm{a})}^{\mathbb{K}_{6_r}^{\textgreek{\textit{β}}}}\varphi - N_{(\mathrm{a})}^{\mathbb{K}_6^{\textgreek{\textit{α}}}}\varphi\right| \leqslant 2^n\reflectedepsilon, \\
	& \reflectedepsilon < \left|\mathbb{K}_{6_r}^{\textgreek{\textit{β}}}\right|^{-1}\int_{\mathbb{K}_{6_r}^{\textgreek{\textit{β}}}}\left|\varphi(x) - N_{(\mathrm{a})}^{\mathbb{K}_6^{\textgreek{\textit{α}}}}\varphi\right|dx \leqslant 2^n \reflectedepsilon, \\
	& \sum_r\left|\mathbb{K}_{6_r}^{\textgreek{\textit{β}}}\right| \leqslant 1/\reflectedepsilon\sum_r\int_{\mathbb{K}_{6_r}^{\textgreek{\textit{β}}}}\left|\varphi(x) - N_{(\mathrm{a})}^{\mathbb{K}_6^{\textgreek{\textit{α}}}}\varphi\right|dx \leqslant 1/\reflectedepsilon\left|\mathbb{K}_6^{\textgreek{\textit{α}}}\right|.
\end{align}
\end{subequations}
Remember that the interior of each $\mathbb{K}_{6_r}^{\textgreek{\textit{β}}}$ is included in $\mathbb{K}_6^{\textgreek{\textit{α}}}$. The symbolic apparatus \eqref{subequations "C6r features"} allows to hybridize the principle \eqref{equation "principle for invertedbreveK6 1"} with this new principle: \\
\begin{equation}
\label{equation "equation "principle for invertedbreveK6 2"}
	\frac{1}{|\invertedbreve{\mathbb{K}}_6|}\int_{\invertedbreve{\mathbb{K}}_6}\left|\varphi(x) - N_{(\mathrm{a})}^{\mathbb{K}_{6_r}^{\textgreek{\textit{β}}}}\varphi\right|dx > \reflectedepsilon, 
\end{equation}
which however is not satisfied by $\mathbb{K}_{6_r}^{\textgreek{\textit{β}}}$. The process of decomposition into $2^n$ equal subcubes mentioned above must therefore be replicated for $\mathbb{K}_{6_r}^{\textgreek{\textit{β}}}$, with the aim that the subcubes obtained can satisfy the new principle, thereby generating a multiset of cubes $\left\{\mathbb{K}_{6_s}^{\textgreek{\textit{γ}}}\right\}_s$, which are contained in $\mathbb{K}_{6_r}^{\textgreek{\textit{β}}}$. This new subdivision satisfies any feature in \eqref{subequations "C6r features"}, when $\mathbb{K}_6^{\textgreek{\textit{α}}}$ is replaced with $\mathbb{K}_6^{\textgreek{\textit{β}}}$, and $\mathbb{K}_6^{\textgreek{\textit{β}}}$ with $\mathbb{K}_6^{\textgreek{\textit{γ}}}$.

So let us proceed with a further principle, which is identical to that in \eqref{equation "equation "principle for invertedbreveK6 2"}, except that the average changes, that is 
\[
	N_{(\mathrm{a})}^{\mathbb{K}_{6_s}^{\textgreek{\textit{γ}}}}.
\]
Consequently, we get a multiset of cubes $\left\{\mathbb{K}_{6_t}^{\textgreek{\textit{δ}}}\right\}_t$, each of which is contained in $\mathbb{K}_{6_s}^{\textgreek{\textit{γ}}}$. Here is the up-to-date list of features, with the formation of two sets of cubes, this time denoted by $\mathbb{K}_{6_r}^{\textgreek{\textit{λ}}}$:
\begin{subequations}
\begin{align}
	&
	\label{align "feature 1.1"} 
	\left|\varphi - N_{(\mathrm{a})}^{\mathbb{K}_{6_{\dot{r}}}^{\textgreek{\textit{λ}}- 1}}\varphi\right| \leqslant \reflectedepsilon \text{ almost everywhere on }\mathbb{K}_{6_{\dot{r}}}^{\textgreek{\textit{λ}}- 1}\backslash\bigcup_r\mathbb{K}_{6_r}^{\textgreek{\textit{λ}}}, \\
	& \left|N_{(\mathrm{a})}^{\mathbb{K}_{6_r}^{\textgreek{\textit{λ}}}}\varphi - N_{(\mathrm{a})}^{\mathbb{K}_{6_{\dot{r}}}^{\textgreek{\textit{λ}}- 1}}\varphi\right| \leqslant 2^n\reflectedepsilon, \\
	& \reflectedepsilon < \left|\mathbb{K}_{6_r}^{\textgreek{\textit{λ}}}\right|^{-1}\int_{\mathbb{K}_{6_r}^{\textgreek{\textit{λ}}}}\left|\varphi(x) - N_{(\mathrm{a})}^{\mathbb{K}_{6_{\dot{r}}}^{\textgreek{\textit{λ}}- 1}}\varphi\right|dx \leqslant 2^n \reflectedepsilon, \\
	& 
	\label{align "feature 4.1"}
	\sum_r\left|\mathbb{K}_{6_r}^{\textgreek{\textit{λ}}}\right| \leqslant 1/\reflectedepsilon\sum_{\dot{r}}\left|\mathbb{K}_{6_{\dot{r}}}^{\textgreek{\textit{λ}}- 1}\right|.
\end{align}	
\end{subequations}
To follow some things to know (the rest is easily verifiable).
\subenumerationisinitium
\item The interior of each $\mathbb{K}_{6_r}^{\textgreek{\textit{λ}}}$ is included in some $\mathbb{K}_{6_{\dot{r}}}^{\textgreek{\textit{λ}} - 1}$.
\item The feature \eqref{align "feature 1.1"} is demonstrable via Lebesgue differentiation theorem \cite{Lebesgue "Lecons sur l'integration et la recherche des fonctions primitives"} \cite{Lebesgue "Sur l'integration des fonctions discontinues"}: all points in $\mathbb{K}_{6_{\dot{r}}}^{\textgreek{\textit{λ}} - 1}\backslash\bigcup_r\mathbb{K}_{6_r}^{\textgreek{\textit{λ}}}$ are associated with a succession of cubes that can be shrunk up to any spatial punctuality, subsequently the averages over each cube is at most $\reflectedepsilon$.
\item The feature \eqref{align "feature 4.1"} is provable with this formula:
\begin{align}
	\sum_r\left|\mathbb{K}_{6_r}^{\textgreek{\textit{λ}}}\right| < 1/\reflectedepsilon\sum_r\int_{\mathbb{K}_{6_r}^{\textgreek{\textit{λ}}}}\left|\varphi(x) - N_{(\mathrm{a})}^{\mathbb{K}_{6_{\dot{r}}}^{\textgreek{\textit{λ}} - 1}}\varphi\right|dx & = 1/\reflectedepsilon\sum_{\dot{r}}\int_{\mathbb{K}_{6_r}^{\textgreek{\textit{λ}}}}\left|\varphi(x) - N_{(\mathrm{a})}^{\mathbb{K}_{6_{\dot{r}}}^{\textgreek{\textit{λ}} - 1}}\varphi\right|dx \notag \\
	& \leqslant 1/\reflectedepsilon\sum_{\dot{r}}\int_{\mathbb{K}_{6_{\dot{r}}}^{\textgreek{\textit{λ}} - 1}}\left|\varphi(x) - N_{(\mathrm{a})}^{\mathbb{K}_{6_{\dot{r}}}^{\textgreek{\textit{λ}} - 1}}\varphi\right|dx \notag \\
	& \leqslant 1/\reflectedepsilon\sum_{\dot{r}}\left|\mathbb{K}_{6_{\dot{r}}}^{\textgreek{\textit{λ}} - 1}\right|\|\varphi\|_{\mathrm{b}\sim} \notag \\
	& = 1/\reflectedepsilon\sum_{\dot{r}}\left|\mathbb{K}_{6_{\dot{r}}}^{\textgreek{\textit{λ}} - 1}\right|.
\end{align}
\subenumerationisfinis
Accordingly, from \eqref{align "feature 4.1"} one achieves $\sum_r\left|\mathbb{K}_{6_r}^{\textgreek{\textit{λ}}}\right| \leqslant \reflectedepsilon^{-{\textgreek{\textit{λ}}}}\left|\mathbb{K}_6^{\textgreek{\textit{α}}}\right|$. We are now able to write these inequalities:
\begin{subequations}
\begin{align}
	& \left|N_{(\mathrm{a})}^{\mathbb{K}_{6_r}^{\textgreek{\textit{β}}}}\varphi - N_{(\mathrm{a})}^{\mathbb{K}_6^{\textgreek{\textit{α}}}}\varphi\right| \leqslant 2^n\reflectedepsilon, \\
	& \left|\varphi - N_{(\mathrm{a})}^{\mathbb{K}_{6_r}^{\textgreek{\textit{β}}}}\varphi\right| \leqslant \reflectedepsilon \text{ almost everywhere on }\mathbb{K}_{6_r}^{\textgreek{\textit{β}}}\backslash\bigcup_s\mathbb{K}_{6_s}^{\textgreek{\textit{γ}}}, \\
	& \text{ergo } \left|\varphi - N_{(\mathrm{a})}^{\mathbb{K}_6^{\textgreek{\textit{α}}}}\varphi\right| \leqslant 2^n\reflectedepsilon + \reflectedepsilon \text{ almost everywhere on }\mathbb{K}_{6_r}^{\textgreek{\textit{β}}}\backslash\bigcup_s\mathbb{K}_{6_s}^{\textgreek{\textit{γ}}}, \\
	& 
	\label{align "inequality 4"}
	\text{and, from } \eqref{align "feature 1"}, \left|\varphi - N_{(\mathrm{a})}^{\mathbb{K}_6^{\textgreek{\textit{α}}}}\varphi\right| \leqslant 2^n(2\reflectedepsilon) \text{ almost everywhere on }\mathbb{K}_6^{\textgreek{\textit{α}}}\backslash\bigcup_s\mathbb{K}_{6_s}^{\textgreek{\textit{γ}}}.
\end{align}
\end{subequations}
Then
\begin{subequations}
\begin{align}
	& \left|\varphi - N_{(\mathrm{a})}^{\mathbb{K}_{6_s}^{\textgreek{\textit{γ}}}}\varphi\right| \leqslant \reflectedepsilon \text{ almost everywhere on }\mathbb{K}_{6_s}^{\textgreek{\textit{γ}}}\backslash\bigcup_t\mathbb{K}_{6_t}^{\textgreek{\textit{δ}}}, \\
	& \text{ergo } \left|\varphi - \mathbb{K}_6^{\textgreek{\textit{α}}}\varphi\right| \leqslant 2^n(3\reflectedepsilon) \text{ almost everywhere on }\mathbb{K}_{6_s}^{\textgreek{\textit{γ}}}\backslash\bigcup_t\mathbb{K}_{6_t}^{\textgreek{\textit{δ}}}, 
\end{align}
\end{subequations}
once we have combined $\left|N_{(\mathrm{a})}^{\mathbb{K}_{6_s}^{\textgreek{\textit{γ}}}}\varphi - N_{(\mathrm{a})}^{\mathbb{K}_{6_{\dot{s}}}^{\textgreek{\textit{β}}}}\varphi\right| \leqslant 2^n\reflectedepsilon$ and $\left|N_{(\mathrm{a})}^{\mathbb{K}_{6_{\dot{s}}}^{\textgreek{\textit{β}}}}\varphi - N_{(\mathrm{a})}^{\mathbb{K}_6^{\textgreek{\textit{α}}}}\varphi\right| \leqslant 2^n\reflectedepsilon$.

The inequality \eqref{align "inequality 4"} permits to extend the procedure also on $\mathbb{K}_6^{\textgreek{\textit{α}}}\backslash\bigcup_t\mathbb{K}_{6_t}^{\textgreek{\textit{δ}}}$, from which we derive a further relation between different values: 
\begin{equation}
\label{equation "Further relation between different values"}
	\left|\varphi - N_{(\mathrm{a})}^{\mathbb{K}_6^{\textgreek{\textit{α}}}}\varphi\right| \leqslant 2^n({\textgreek{\textit{λ}}}\reflectedepsilon) \text{ almost everywhere on }\mathbb{K}_6^{\textgreek{\textit{α}}}\backslash\bigcup_t\mathbb{K}_{6_t}^{\textgreek{\textit{λ}}}. 
\end{equation}	
The expression \eqref{equation "Further relation between different values"} gives us the opportunity to define this containment:
\begin{equation}
	\bigg|\left(x \in \mathbb{K}_6 \colon \left|\varphi(x) - N_{(\mathrm{a})}^{\mathbb{K}_6}\varphi\right| > 2^n({\textgreek{\textit{λ}}}\reflectedepsilon)\right)\bigg| \subset \bigcup_r\mathbb{K}_{6_r}^{\textgreek{\textit{λ}}}.
\end{equation}

The way to prove the theorematic inequality \eqref{equation "Theorematic inequality"} is to go through the inequality 
\begin{equation}
	\sum_r\left|\mathbb{K}_{6_r}^{\textgreek{\textit{λ}}}\right| \leqslant \reflectedepsilon^{-{\textgreek{\textit{λ}}}}\left|\mathbb{K}_6^{\textgreek{\textit{α}}}\right|,
\end{equation}
previously encountered, and the one present in the relation \eqref{equation "Further relation between different values"}. Letting first $2^n({\textgreek{\textit{λ}}}\reflectedepsilon) < \alpha \leqslant 2^n(\textgreek{\textit{λ}} + 1)\reflectedepsilon$, for $\alpha > 0$ and $\textgreek{\textit{λ}} \geqslant 0$, and defining then $-\textgreek{\textit{λ}} \leqslant 1 - \alpha/2^n\reflectedepsilon$, the final formula, once it is determined that $\reflectedepsilon = e > 1$, is
\begin{align}
	\bigg|\left(x \in \mathbb{K}_6 \colon \left|\varphi - N_{(\mathrm{a})}^{\mathbb{K}_6}\varphi\right| > \alpha\right)\bigg| & \leqslant \bigg|\left(x \in \mathbb{K}_6 \colon \left|\varphi - N_{(\mathrm{a})}^{\mathbb{K}_6}\varphi\right| > 2^n({\textgreek{\textit{λ}}}\reflectedepsilon)\right)\bigg|, \notag \\
	& \leqslant \sum_r\left|\mathbb{K}_{6_r}^{\textgreek{\textit{λ}}}\right| \leqslant 1/\reflectedepsilon^{\textgreek{\textit{λ}}}\left|\mathbb{K}_6^{\textgreek{\textit{α}}}\right| \\
	& = \left|\mathbb{K}_6\right|\exp{\{-\textgreek{\textit{λ}}\log\reflectedepsilon\}} \leqslant \left|\mathbb{K}_6\right|\reflectedepsilon\exp{\left\{-\frac{\alpha\textgreek{\textit{λ}}\log\reflectedepsilon}{2^n\reflectedepsilon}\right\}},
\end{align} 
as required.
\end{proof}

\section{Cubic Exponential Integrability}

The Theorem \ref{theorema "John–Nirenberg"} implies that, for any $\varphi \in \varphi_{\mathrm{b}\sim}^{[\mathbb{R}]}$, there is  an exponential integrability over all $\mathbb{K}_6$, i.e.
\begin{equation}
	\frac{1}{\left|\mathbb{K}_6\right|}\int_{\mathbb{K}_6}\exp{\left\{\frac{\zeta\left|\varphi(x) - N_{(\mathrm{a})}^{\mathbb{K}_6}\varphi\right|}{\|\varphi\|_{\mathrm{b}\sim}}\right\}}dx \leqslant 1 - \frac{2^ne^2\zeta}{2^ne\zeta - 1},
\end{equation} 
for some $\zeta < \frac{1}{2^ne}$. Which is proved by the resulting combination,
\begin{equation}
	\frac{1}{\left|\mathbb{K}_6\right|}\int_{\mathbb{K}_6}\exp{\left\{\frac{\zeta\left|\varphi(x) - N_{(\mathrm{a})}^{\mathbb{K}_6}\varphi\right|}{\|\varphi\|_{\mathrm{b}\sim}}\right\}}dx \leqslant \int^\infty_0e^\alpha\exp{\left\{-\frac{(2^ne)^{-1}\left(\frac{\alpha}{\zeta}\|\varphi\|_{\mathrm{b}\sim}\right)}{\|\varphi\|_{\mathrm{b}\sim}}\right\}}d\alpha = c_{(\zeta)},
\end{equation}
for $\zeta < (2^ne)^{-1}$, where $c_{(\zeta)}$ is a constant.

\section[Function Space of $\varphi_{\mathrm{b}\sim}^{[\mathbb{R}]}$]{Function Space of $\protect\pseudobold{\varphi_{\mathrm{b}\sim}^{[\mathbb{R}]}}$}

Now let us talk about space in relation to our harmonic function.

\subsection[Dual of the Hardy Space $\Hardy^1(\mathbb{R}^n)$ and Banach Extent]{Dual of the Hardy Space $\protect\pseudobold{\Hardy^1(\mathbb{R}^n)}$ and Banach Extent}

\enumerationisinitium
\item As F. John \cite{John "Rotation and strain"} pointed out, the space of a function of bounded mean oscillation is but a (Lebesgue) function space. C. Fefferman \cite{Fefferman "Characterizations of bounded mean oscillation"}, and later E.M. Stein \cite[I, IV]{Fefferman and Stein Hp spaces of several variables"}, have shown that there is a duality between the Hardy space $\Hardy^{p = 1}$ and all bounded mean oscillation functions—which gives a characterization of $\Hardy^p$ in terms of boundary properties of harmonic functions.\footnote{
	{} A Hardy (or Hardy–Freidrich) space $\Hardy^p(\mathbb{R}^n)$, $0 < p < \infty$, more correctly defined as a real Hardy space on $\mathbb{R}^n$, is a space of distributions on the real line, whose singularity increases as $p$ decreases. One of its most significant forms is that of space of all tempered distributions:
\[
	\|\varphi\|_{\Hardy^p} = \left\|\left(\sum_{r \in \mathbb{Z}}\left|\Laplacian_r(\varphi)\right|^2\right)^\frac{1}{2}\right\|_{\Lebesgue^p} < \infty, \enspace 0 < p \leqslant 1.
\]	
	Cfr. G.H. Hardy \cite{Hardy "The Mean Value of the Modulus of an Analytic Function"} and F. Freidrich \cite{Freidrich "Uber die Randwerte einer analytischen Funktion"}.
	}  
Put otherwise, 
\subenumerationisinitium
\item a bounded mean oscillation $\varphi_{\mathrm{b}\sim}$ is the dual of the Hardy space $\Hardy^1(\mathbb{R}^n)$,
\item a continuous linear functional, say, $\mathscr{I}_\textsc{l}$ on $\Hardy^1(\mathbb{R}^n)$ is realizable in terms of a mapping:
\begin{equation}
\label{equation "Continuous linear functional"}
	\mathscr{I}_\textsc{l}[\eta_h] = \int_{\mathbb{R}^n}\varphi(x)\eta_h(x)dx,
\end{equation}
where $\varphi \in \varphi_{\mathrm{b}\sim}^{[\mathbb{R}]}$, and $\eta_h \in \Hardy^1$. From Eq. \eqref{equation "Continuous linear functional"} it is inferred that, for each $\varphi \in \varphi_{\mathrm{b}\sim}^{[\mathbb{R}]}$, the functional $\mathscr{I}_\textsc{l}[\eta_h]$ is bounded on $\Hardy^1$ via an inequality of this kind:
\begin{equation}
\label{equation "Inequality for duality between Hardy space and function of bounded mean oscillation"}
	\left|\int_{\mathbb{R}^n}\varphi(\eta_h)dx\right| \leqslant c \left\|\varphi\right\|_{\mathrm{b}\sim}\|\eta_h\|_{\Hardy^1},
\end{equation}
setting $\eta_h$ in the Hardy subspace $\Hardy^1_\tau \subset \Hardy^1$. And hence it is useful to write that
\begin{equation}
	\int_{\mathbb{R}^n}\varphi(\eta_h)dx = \sum_j\kappa_j\int_{\mathbb{R}^n}\varphi(x)\tau_j(x)dx,
\end{equation}
clarified the detail that $\eta_h = \sum\kappa_j\tau_j$, and that $\eta_h$ is convergent in $\Lebesgue^1(\mathbb{R}^n)$. The demonstration of inequality \eqref{equation "Inequality for duality between Hardy space and function of bounded mean oscillation"}, for a bounded $\varphi$ and for any $\eta_h \in \Hardy^1$,
 is finally given by
\begin{equation}
	\left|\int\varphi(x)\eta_h(x)dx\right| \leqslant \sum_j \left|\frac{\kappa_j}{\mathbb{B}_j}\right|\int_{\mathbb{B}_j}\left|\varphi(x) - \varphi_{\mathbb{B}_j}\right|dx \leqslant \sum |\kappa_j| \cdot \|\varphi\|_{\mathrm{b}\sim},
\end{equation}
given a ball $\mathbb{B}_j \in \mathbb{R}^n$, once the non-equality $\left|\tau_j(x)\right| \leqslant \left|\mathbb{B}_j\right|^{-1}$ is designated.
\subenumerationisfinis
\item It is possible to translate everything described above into a simple assertion: $\Hardy^1(\mathbb{R}^n)$ can be considered as a replacement for $\Lebesgue^1(\mathbb{R}^n)$, whilst $\|\varphi\|_{\mathrm{b}\sim}$ plays the same role with respect to the space $\Lebesgue^\infty(\mathbb{R}^n)$ of bounded functions on $\mathbb{R}^n$.
\item A bounded mean oscillation function (supported in a cube $\mathbb{K}_6$ on $\mathbb{R}^n$) is locally $\Lebesgue^p$-integrable,
\begin{equation}
	\|\varphi\|_{\mathrm{b}\sim} = \sup_{\mathbb{K}_6}\left\{\frac{1}{|\mathbb{K}_6|}\int_{\mathbb{K}_6}\left|\varphi(x) - N_{(\mathrm{a})}^{\mathbb{K}_6}\varphi\right|dx\right\} < \infty, \enspace 1 < p < \infty.
\end{equation}
Which means that $\|\varphi\|_{\mathrm{b}\sim}$ is the Banach space \cite{Banach "Sur les operations dans les ensembles abstraits et leur application aux equations integrales"} of every function $\varphi \in \Lebesgue^1_\mathrm{loc}(\mathbb{R}^n)$.
\enumerationisfinis

\setcounter{secnumdepth}{0}
\section{Graphic Appendix. Cubic Inclusion of Matryoshka-type}
\setcounter{secnumdepth}{3}

When it comes to cubes contained within other cubes, the simplified graphic representation is something like this,

\vspace{2mm}

\begingroup
\begin{center}
\begin{tikzpicture}
  \draw[draw = mallard] (2.5, 0, 0) coordinate (x) |- (0, 2.5, 0) coordinate [midway] (h) coordinate (y) -- (0, 2.5, 2.5) coordinate (a) -- (0, 0, 2.5) coordinate (z) -- (2.5, 0, 2.5) edge (x) -- (2.5, 2.5, 2.5) coordinate (v) edge (h)
  -- (a);
  \draw[dashed, draw = mallard] (0, 0 ,0) coordinate (o) edge (x) edge (y) -- (z);
  \draw[->] (x) -- +(1.5pt, 0, 0) node [midway, above]{$x$};
  \draw[->] (y) -- +(0, 1.5pt, 0) node [midway, right]{$y$};
  \draw[->] (z) -- +(0, 0, 1.5pt) node [midway, above]{$z$};
  \draw[draw = eggplant] (v) -- ++(0, 0, -1) coordinate (d) -- ++(-1, 0, 0) coordinate (e) -- ++(0, 0, 1) |- ++(1, -1, 0) coordinate [midway] (f) -- ++(0, 0, -1) coordinate (g) -- (d);
  \draw[dashed, draw = eggplant] (e) -- ++(0, -1, 0) coordinate (c) edge (f) -- (g);
  \draw[draw = pumpkin] (v) -- ++(0, 0, -0.5) coordinate (d) -- ++(-0.5, 0, 0) coordinate (e) -- ++(0, 0, 0.5) |- ++(0.5, -0.5, 0) coordinate [midway] (f) -- ++(0, 0, -0.5) coordinate (g) -- (d);
  \draw[dashed, draw = pumpkin] (e) -- ++(0, -0.5, 0) coordinate (c) edge (f) -- (g);
  \draw[draw = cyan-blue] (v) -- ++(0, 0, -0.25) coordinate (d) -- ++(-0.25, 0, 0) coordinate (e) -- ++(0, 0, 0.25) |- ++(0.25, -0.25, 0) coordinate [midway] (f) -- ++(0, 0, -0.25) coordinate (g) -- (d);
  \draw[dashed, draw = cyan-blue] (e) -- ++(0, -0.25, 0) coordinate (c) edge (f) -- (g);
\end{tikzpicture}
\end{center}
\endgroup
with a inclusion of Matryoshka-type: $\textcolor{mallard}{\{}
	\textcolor{mallard}{\mathbb{K}_6},
	\textcolor{eggplant}{\{}\textcolor{eggplant}{\mathbb{K}_6},
	\textcolor{pumpkin}{\{}\textcolor{pumpkin}{\mathbb{K}_6}, \textcolor{cyan-blue}{\{}\textcolor{cyan-blue}{\mathbb{K}_6}\textcolor{cyan-blue}{\}}\textcolor{pumpkin}{\}}\textcolor{eggplant}{\}
	\textcolor{mallard}{\}}}$, where any $\mathbb{K}_6 = \{\varnothing\}$.

\setcounter{footnote}{0} 

\cleardoublepage
\phantomsection
\addcontentsline{toc}{section}{\refname}




\begin{thebibliography}{0}\small 
\thispagestyle{empty}
\markboth{Bibliography}{Bibliography}

\bibitem{Banach "Sur les operations dans les ensembles abstraits et leur application aux equations integrales"} S. Banach, \textit{Sur les opérations dans les ensembles abstraits et leur application aux équations intégrales}, Fund. Math., Vol. 3, № 1, 1922, pp. 133-181.

\bibitem{Fefferman "Characterizations of bounded mean oscillation"} C. Fefferman, \textit{Characterizations of bounded mean oscillation}, Bull. Amer. Math. Soc., Vol. 77, № 4, 1971, pp. 587-588.

\bibitem{Fefferman and Stein Hp spaces of several variables"} C. Fefferman and E.M. Stein, \textit{$\Hardy^p$ spaces of several variables}, Acta Math., Vol 129, № 3-4, 1972, pp. 137-193.

\bibitem{Freidrich "Uber die Randwerte einer analytischen Funktion"} F. Freidrich, \textit{Über die Randwerte einer analytischen Funktion}, Math. Z., Vol. 18, № 1, 1923, pp. 87-95.

\bibitem{Hardy "The Mean Value of the Modulus of an Analytic Function"} G.H. Hardy, \textit{The Mean Value of the Modulus of an Analytic Function}, Proc. Lond. Math. Soc., Ser. 2, Vol. 14, № 1, 1915, pp. 269-277.

\bibitem{John "Rotation and strain"} F. John, \textit{Rotation and strain}, Comm. Pure Appl. Math., Vol. 14, № 3, 1961, pp. 391-413.

\bibitem{John and Nirenberg "On Functions of Bounded Mean Oscillation"} F. John and L. Nirenberg, \textit{On Functions of Bounded Mean Oscillation}, Comm. Pure Appl. Math., Vol. 14, № 3, 1961, pp. 415-426.

\bibitem{Lebesgue "Lecons sur l'integration et la recherche des fonctions primitives"} H. Lebesgue, \textit{Leçons sur l'intégration et la recherche des fonctions primitives}, Gauthier-Villars, Imprimeurs-Libraires, Paris, 1904.
\bibitem{Lebesgue "Sur l'integration des fonctions discontinues"} ------, \textit{Sur l'intégration des fonctions discontinues}, Ann. Sci. Éc. Norm. Supér., 3\textsuperscript{e} Sér., Tome XXVII, 1910, pp. 361-450.

\end{thebibliography}
\end{document}